\documentclass[10pt]{article}
\usepackage{amsmath, amsfonts, amsthm, amssymb,verbatim,graphicx,color}
        \newtheorem{theorem}{Theorem}[section]
        \newtheorem{proposition}[theorem]{Proposition}
        \newtheorem{definition}[theorem]{Definition}

\newtheorem{claim}{Claim}
\numberwithin{equation}{section}

\usepackage{mathabx,a4wide}

\textheight = 631pt
\textwidth = 467pt
\oddsidemargin = 11pt
\evensidemargin = 7pt
\topmargin = -15pt
\headheight = -1pt


\newcommand \del        \partial
\newcommand{\auth}{\textsc}

\newcommand \be     {\begin{equation}}
\newcommand \ee     {\end{equation}}
\newcommand \RR      {\mathbb{R}}
\newcommand \eps     \epsilon

\let\oldmarginpar\marginpar
\renewcommand\marginpar[1]{\-\oldmarginpar[\raggedleft\footnotesize #1]%
{\raggedright\footnotesize #1}}
%


\begin{document}

\title{The relativistic Burgers equation on a FLRW background
\\
 and its finite volume approximation}

\author{
Tuba Ceylan$^1$, Philippe G. LeFloch$^2$,  and Baver Okutmustur$^1$}

\date{December 2015}

\maketitle

\footnotetext[1]{Department of Mathematics, Middle East Technical University (METU), 06800 Ankara, Turkey.
 E-mail: {\sl ceylanntuba@gmail.com, baver@metu.edu.tr}}

\footnotetext[2]{Laboratoire Jacques-Louis Lions \& Centre National de la Recherche Scientifique,
Universit\'e Pierre et Marie Curie (Paris VI), 4 Place Jussieu, 75252 Paris, France.
E-mail: {\sl contact@philippelefloch.org.}

\textit{Key Words and Phrases.} Relativistic Burgers equation, FLRW metric, hyperbolic balance law, finite volume method, well-balanced scheme.}


\begin{abstract}
A relativistic generalization of the inviscid Burgers equation was proposed by LeFloch, Makhlof, and Okutmustur and then investigated on a Schwarz\-schild background. Here, we extend their analysis to a Friedmann--Le\-ma\^{i}tre--Robertson--Walker (FLRW) background.
This problem is more challenging due to the existence of non-trivial spatially homogeneous solutions.
First, we derive the relativistic Burgers model of interest and determine its spatially homogeneous solutions. Second, we design a numerical scheme based on the finite volume methodology, which is well-balanced in the sense that spatially homogeneous solutions are preserved at the discrete level of approximation.
Numerical experiments demonstrate the efficiency of the proposed method for weak solutions containing shock waves.
\end{abstract}

%
%


\section{Introduction}

\subsection*{Aim of this paper}

The inviscid Burgers equation is an important model in computational
fluid dynamics, and represents the simplest (yet challenging)
example of a nonlinear hyperbolic conservation law. Recently,
several relativistic and non-relativistic generalizations of the
classical Burgers equation have been introduced by LeFloch and
collaborators \cite{ALO,LO,LO2,LMO}, which also take into account
geometrical effects. In particular, the fundamental {\sl
relativistic Burgers equation} was derived by identifying a
hyperbolic balance law which satisfies the same Lorentz invariance
property as the one satisfied by the Euler equations of relativistic
compressible fluids. The relativistic generalization of this model
was studied on both a flat background and a Schwarzschild
background. A numerical scheme was developed by using the finite
volume methodology and allowed to capture discontinuous solutions
containing shock waves for the relativistic Burgers equation.

The lack of maximum or total variation diminishing principles is lacking for the model under consideration in this work, and the numerical analysis of this model is therefore particularly challenging. Our main objective is designing an accurate and robust numerical approximation method.  

Specifically, we will work here on  Friedmann--Lema\^{i}tre--Robertson--Walker (FLRW)
background, which is an important solution to Einstein's field equations relevant to cosmology. (See for instance \cite{HEL} for background material.)
The main purpose of the article is to discuss the relativistic Burgers equation on a FLRW background  and to design a finite volume scheme for its approximation by closely following LeFloch, Makhlof, and Okutmustur \cite {LMO}.

In the present paper, we continue this analysis and introduce the class of {\sl relativistic Burgers equation on a curved background}, derived as follows. We start from
the relativistic Euler equations on a curved background $(M,g)$ (that is, a smooth, time-oriented Lorentzian manifold), which read
\be
\label{Euler0}
\aligned
 \nabla_\alpha T^{\alpha\beta} &=0,
\\
 T^{\alpha\beta}
&= (\rho c^2 + p) \, u^\alpha u^\beta + p \, g^{\alpha\beta},
\endaligned
\ee
where $T^{\alpha\beta}$ is the so-called energy-momentum tensor for perfect fluids. Here, $\rho \geq 0$ denotes the mass-energy density of the fluid, while
 the future-oriented, unit timelike vector field $u=(u^\alpha)$ represents the velocity of the fluid: $g_{\alpha\beta} \, u^\alpha u^\beta = -1$.

As usual, the model \eqref{Euler0} must be supplemented with  an equation of state for the pressure $p = p(\rho)$. In the present work, we assume that the fluid is pressureless, that is,
$p \equiv 0$, so that the Euler system takes the simpler form
\be
\nabla_\alpha \big( \rho \, u^\alpha u^\beta  \big) = 0.
\ee
Provided $\rho>0$ and $\rho, u$ are sufficiently regular and observing that $g_{\alpha\beta} \nabla_\alpha u^\alpha u^\beta=0$ (that is, $u$ is orthogonal to $\nabla u$, as is easiy checked by differentiating the identity stating that $u$ is unit vector), we arrive at
$$
 \rho \nabla_\alpha u^\alpha u^\beta+ \rho u^\alpha  \nabla_\alpha u^\beta + u^\alpha u^\beta   \nabla_\alpha  \rho=0.
$$
By contracting this equation with the covector $u_\beta$, we get
$$
u^\alpha \nabla_\alpha \rho= - \rho \nabla_\alpha u^\alpha,
$$
which gives us
$$
 \rho u^\beta  \nabla_\alpha u^\alpha +  \rho (  u^\alpha  \nabla_\alpha u^\beta-  u^\beta  \nabla_\alpha   u^\alpha )=0.
$$
Provided $\rho>0$, it thus follows that
\be
\label{Euler4}
 u^\alpha  \nabla_\alpha u^\beta=0,
\ee
which is the {\bf geometric relativistic Burgers equation}, which will be the focus of the present paper.


\subsection*{Relativistic Burgers equations on a curved background}

We rely here on LeFloch, Makhlof, and Okutmustur \cite {LMO} who treated the Minkowski and Schwarzschild spacetimes. First of all, the standard inviscid Burgers equation is one of the simplest example of nonlinear hyperbolic conservation laws, and reads
\be
\label{burger}
\del_tv + \del_x(v^2/2) = 0,
\ee
with $v=v(t,x)$, $t>0$ and $x\in \RR$.
This equation can be formally deduced from the Euler system of compressible fluids
$$
\aligned
&\del_t \rho + \del_x(\rho v) = 0, \\
&\del_t( \rho v) + \del_x(\rho v^2 + p(\rho)) = 0,
\endaligned
$$
where $p(\rho)$ denotes the pressure of the fluid with $\rho$ is the
density. By assuming a pressureless fluid  $p(\rho) \equiv 0$ and keeping a suitable
combination of the two equations, we can obtain \eqref{burger}. Namely, the following formal computation holds: 
$$
\aligned
0&= v\,\del_t( \rho) +  \rho\, \del_t( v) +   v^2 \del_x(\rho) +  2v \rho \, \del_x(v) \\
&=  \rho( \del_tv +2v\del_xv) + v ( \del_t \rho + v\del_x \rho) \\
&=  \rho( \del_tv +2v\del_xv) - v \rho \del_x v =  \rho( \del_tv
+v\del_x v).\endaligned
$$
Provided the density does not vanish, we thus get $\del_tv +v\del_x v=0$, 
which is equivalent to \eqref{burger}.

The  relativistic Burgers equation on  a flat  spacetime can be derived either by
imposing the Lorentz invariance property or formally from the Euler system on a curved background.
More precisely, the relativistic Burgers equation derived in \cite {LMO} on a flat background described by the Minkowski  metric in spherical coordinates $(t,r,\theta, \varphi)$
$$
g = -c^2dt^2 + dr^2+r^2d\theta^2 + r^2 \sin^2 \theta d\varphi^2, 
$$
reads 
\be
\label{RB}
\del_{t}  v + \del_{r}\big( {1 /{\eps^2}} \big( -1 + \sqrt{1+ \eps^2 v^2}  \big)\big) =0,
\ee
where $\eps$ is the inverse of the light speed. 

On the other hand, starting from the Euler system for relativistic compressible fluids and imposing vanishing pressure, we arrive at the following version of the non-relativistic and relativistic Burgers equations on Schwarzschild spacetime:
\be
\label{BES}
\del_t (r^2 {v}) + \del_r\Big(r(r-2m){{v^2 \over 2}}\Big) = {r}v^2-mc^2,
\ee
\be
\label{BEST}
\del_t(r^2 v) + \del_r \Big( r(r-2m)  \Big(-1+\sqrt{1+v^2}\Big)\Big)=0,
\ee
where the Schwarzschild metric in coordinates $(t,r,\theta, \varphi)$ is defined by
$$
g =-\Big(1-{2m\over r}\Big) c^2dt^2 + \Big(1-{2m \over r}\Big)^{-1}dr^2 + r^2 (d\theta^2 + \sin ^2 d\varphi^2),
$$
so that $m>0$ is the mass parameter, $c$ is the light speed, $r$ is
the Schwarzschild radius and  $r>2m$. We refer the reader to
\cite{LMO} for further details. In the present work, our main
objective is the discussion of yet another generalization, that is
the relativistic Burgers equation on  a
Friedmann--Lema\^{i}tre--Robertson--Walker (FLRW) spacetime.


\section{FLRW background spacetimes}

\subsection*{Motivations from cosmology}

Cosmology is based on Einstein's theory of gravity and certain classes of explicit solutions are often considered. (See for instance \cite{HEL} for the notions in this section.)
Recall first that Einstein himself introduced in his field equation
the so-called cosmological constant $\Lambda$, in order to ensure that static solutions representing a static universe exist. Next, without requiring this cosmological constant,
Friedmann discovered solutions to Einstein equations describing an expanding universe. At the same time,  Lema\^{i}tre proposed the ``Big Bang model", which describes an expanding universe from a singular state and derived the ``distance redshift" relation. This circle of ideas, together with further works by Robertson and  Walker, led to a theory based on a family of solutions, now referred as
 the Friedmann--Lema\^{i}tre--Robertson--Walker spacetimes describing the whole universe evolution.

In short, the cosmological principle states that the universe is \textsl{ homogeneous} (has spatial translation symmetry) and \textsl{isotropic} (has spatial rotation symmetry). According to this principle, the universe may evolve in time, in either a contracting or an expanding direction. Observations  indicate that the universe is {\sl expanding}; whereas galaxies, quasars and galaxy clusters evolve with redshift, and the temperature of the cosmic microwave background (a uniform background of radio waves which fill the universe) is decreasing. An important feature in cosmology works is that studies are always done in {\sl co-moving coordinates} which expand with the universe. Furthermore, three topologies (positive, negative, or vanishing curvature) are possible and the universe is referred to be closed, open, or flat, respectively.


\subsection*{Expression of the FLRW metric}

We will work here with the FLRW metric describing a spatially homogeneous and isotropic three-dimensional space. In term of the proper time $t$ measured by a co-moving observer, and by introducing radial $r$ and angular ($\theta$ and $\varphi$) coordinates in the co-moving frame, we can express the metric of such a $3+1$-dimensional spacetime in the form
\be
\label{flrw}
g =-c^2dt^2+a(t)^2\Big({dr^2 \over 1-kr^2}+r^2d\theta^2+r^2\sin^2\theta d\varphi^2\Big),
\ee
where $k=0, \pm 1$. The variable $t$ is the proper time experienced by co-moving observers, who remain at rest in co-moving coordinates $dr=d\theta=d\varphi=0$. The time variable $t$ appearing in the FLRW metric is the time that would be measured by an observer who sees uniform expansion of the surrounding universe; it is named as the {\it cosmological proper time} or cosmic time.

The function $a$ reads $a(t)=a_0\Big({t\over t_0}\Big)^\alpha$, 
where, for the FLRW metric,  $\alpha={2 \over 3}$, $t_0$ is the age of the universe (which is a `large' number) and $a_0=1$  refers to `today'.
In addition, the parameter $k$,  a constant in time and space, is related to the spacetime curvature $K$  by the relation $k={a(t)}^2K$. 
We can distinguish between three cases:
\be
k = \begin{cases}1, \quad  \, \mbox {sphere (of positive curvature)},\\
0, \quad \, \mbox {(flat) Euclidean space, }\\
-1, \,\,  \mbox {hyperboloid (of negative curvature)}. \end{cases}
\ee

The FLRW metric can also be used to express the line element for
homogeneous, isotropic spacetime in matrix form  as
$$
g =g_{ij}dx^idx^j= (dt\, dr\, d\theta \, d\varphi)\,
\begin{pmatrix}
-c^2 & 0 & 0 & 0  \\
 0 & {a^2\over 1-kr^2} & 0 & 0 \\
 0 & 0& a^2r^2 & 0 \\
 0 & 0& 0 & a^2r^2\sin^2\theta
\end{pmatrix}
\,
\begin{pmatrix}
dt\\
dr\\
d\theta\\
d\varphi
\end{pmatrix}.
$$
Thus, the FLRW metric is diagonal with \be \label{g1} g_{00}=-c^2,
\, g_{11}={a^2\over1-kr^2}, \, g_{22}=a^2r^2, \, g_{33}=
a^2r^2\sin^2\theta, \ee as its non-zero covariant components, and
the corresponding contravariant components are \be \label{g2}
g^{00}=-{1\over c^2}, \, g^{11}={1-kr^2\over a^2}, \, g^{22}={1\over
a^2r^2}, \, g^{33}= {1\over a^2r^2\sin^2\theta}, \ee with
$
g^{ik}g_{kj}=\delta_j^i,
$
where $\delta_j^i$ is Kronecker's delta function.  We normalize
light speed parameter ($c=1$) in the FLRW metric for simplification
so that $g_{00}=g^{00}=-1$. The coordinates $(r,\theta,\varphi)$ of
the metric are co-moving coordinates. In the FLRW metric, as the
universe expands the galaxies keep the same coordinates
$(r,\theta,\varphi)$ and only the scale factor $a(t)$ changes with
time.
\subsection*{Christoffel symbols for FLRW background}

We need first to calculate the Christoffel symbols
$\Gamma_{\alpha\beta}^\mu$. The metric tensors tell us how to define distance between neighbouring points and the connection coefficients tell us how to define parallelism between neighbouring points.
We calculate the Christoffel symbols by using \eqref{g1} and \eqref{g2} with
$
\Gamma_{\alpha\beta}^\mu={1\over 2}g^{\mu \nu}(-\del_\nu  g_{\alpha\beta}+ \del_\beta g_{\alpha\nu}+ \del_\alpha g_{\beta\nu}),
$
where $\alpha, \beta, \mu, \nu \in \{0,1,2, 3\}$.  To begin with, we calculate two typical
coefficients by using \eqref{g1} and \eqref{g2}, as follows:
$$
\Gamma_{00}^0={1\over 2}g^{00}(-\del_0  g_{00}+ \del_0 g_{00}+ \del_0 g_{00})={1 \over 2}(-1)(0)=0,
$$
and
$$
\aligned
\Gamma_{11}^0
& ={1\over 2}g^{00}(-\del_0  g_{11}+ \del_1 g_{10}+ \del_0 g_{10})
\\
& ={1\over 2}(-1)\big(-\del_0({a^2\over (1-kr^2)})\big)={a \dot a \over c(1-kr^2)}.
\endaligned
$$
Similarly, we obtain the other non-vanishing Christoffel symbols as:
\be
\aligned
&\Gamma_{11}^0={ a \dot a\over c(1-kr^2)}, \quad
\Gamma_{22}^0={a \dot a r^2\over c}, \quad
\Gamma_{33}^0={a \dot a r^2 \sin^2\theta \over c},
 \\
 &\Gamma_{11}^1={kr\over 1-kr^2}, \quad
\Gamma_{22}^1=-r(1-kr^2), \quad
\Gamma_{33}^1= -r(1-kr^2)\sin^2 \theta, \\
&\Gamma_{33}^2= -\sin \theta \cos \theta,\quad
\Gamma_{23}^3=\Gamma_{32}^3=\cot \theta,
\quad
\Gamma_{12}^2=\Gamma_{21}^2=\Gamma_{31}^3=\Gamma_{13}^3={1\over r}, \\
&\Gamma_{01}^1=\Gamma_{10}^1=\Gamma_{02}^2=\Gamma_{20}^2=\Gamma_{30}^3=\Gamma_{03}^3= {\dot a\over ca}.
\endaligned
\ee


\section{From the Euler system to the relativistic Burgers equation}

\subsection*{The energy-momentum tensor for perfect fluids}

We assume that solutions to the Euler equations depend only on the time
variable $t$ and the radial variable $r$, and that the non-radial components of the velocity vanish, that is, $(u^\alpha) = (u^0(t,r), u^1(t,r), 0,0)$. Since $u$ is unit vector,
we have $u^{\alpha} u_{\alpha}=-1$ and we can write
$$
u^{\alpha} u_{\alpha}=u^0 u_0+u^1 u_1=g_{00}(u^0) (u^0)+g_{11}(u^1) (u^1),
$$
which gives us
\be
-1= g_{00}(u^0)^2+g_{11}(u^1)^2.
\ee
Plugging the covariant components into this equation, it follows that
\be
\label{unit}
-1 = - (u^0)^2+{a(t)^2\over 1-kr^2}(u^1)^2.
\ee
The  coordinates are taken to be $(x^0, x^1, x^2, x^3) = (c t,r, 0, 0)$. I is convenient to introduce the velocity component  
\be
\label{velocity}
v :=  {ca(t) \over (1-kr^2)^{1/2}} {u^1 \over u^0}.
\ee
By using  \eqref{unit}  and \eqref{velocity} with a simple algebraic manipulation, we obtain the following  identities
\be
\label{u}
(u^0)^2 = {c^2 \over (c^2-v^2) }, \qquad
\quad
(u^1)^2 ={v^2(1-kr^2) \over a^2(c^2-v^2)}.
\ee
Then, in order to calculate the tensor components, we need to recall the   energy momentum tensor of perfect fluids formula, namely
\be
\label{tens}
T^{\alpha\beta} = (\rho c^2 + p) \, u^\alpha \, u^\beta + p \, g^{\alpha\beta}.
\ee
By inserting the terms from the relation \eqref{u} and the contravariant components \eqref{g2} into the formula \eqref{tens}, we obtain the components of the energy momentum tensor. For example, we have 
$$
T^{00} = (\rho c^2 + p) u^0  u^0 + p  g^{00}={c^2 \over c^2-v^2}(\rho c^2+p)-p={ \rho c^4+pv^2 \over c^2-v^2}.
$$
In the same way  the other non-vanishing components are 
 \[
\begin{array}{lll}
\displaystyle  T^{01} = T^{10} = {cv(1-kr^2)^{1/2}( \rho c^2+p) \over a(c^2-v^2)}, & \displaystyle T^{11} = {c^2(1-kr^2)(v^2 \rho +p) \over a^2(c^2-v^2)},\\
\displaystyle T^{22} = {p \over a^2r^2}, & \displaystyle T^{33} = {p \over a^2r^2 \sin^2\theta}. &
\end{array}
\] 


\subsection*{The pressureless Euler system on   FLRW background}

In the previous section, Christoffel symbols and energy momentum tensors for perfect fluids
were derived. In this section, we are in a position to derive the Euler system on a FLRW spacetime. We recall the Euler equations  $\nabla_\alpha T^{\alpha\beta} =0,$  which can be rewritten as
\be
\label{wrt}
\del_\alpha T^{\alpha\beta} +\Gamma_{\alpha\gamma}^\alpha  T^{\gamma\beta}+\Gamma_{\alpha\gamma}^\beta  T^{\alpha\gamma}=0.
\ee
There are two sets of equations depending on $\beta$. Firstly taking  $\beta=0$ in \eqref{wrt} yields
$$
\del_\alpha T^{\alpha 0} +\Gamma_{\alpha\gamma}^\alpha  T^{\gamma 0}+\Gamma_{\alpha\gamma}^0  T^{\alpha\gamma}=0,
$$
which is equivalent to
$$
\aligned
&\del_0 T^{0 0}+\Gamma_{0 \gamma}^0  T^{\gamma 0}+\Gamma_{ \gamma 0}^0  T^{\gamma 0}+\del_1 T^{1 0}+\Gamma_{1 \gamma}^1  T^{\gamma 0}+\Gamma_{1 \gamma}^0  T^{1 \gamma }+\del_2 T^{2 0}+\Gamma_{2 \gamma}^2  T^{ \gamma 0}\\
&+\Gamma_{2 \gamma}^0  T^{2 \gamma }+\del_3 T^{3 0}+\Gamma_{3 \gamma}^3  T^{ \gamma 0 }+\Gamma_{3 \gamma}^0  T^{3 \gamma}=0.
\endaligned
$$
We next consider the exponent $\beta=1$, that is, 
$
\del_\alpha T^{\alpha 1} +\Gamma_{\alpha\gamma}^\alpha  T^{\gamma 1}+\Gamma_{\alpha\gamma}^1  T^{\alpha\gamma}=0,
$
which gives us
$$
\aligned
&\del_0 T^{0 1}+\Gamma_{0 \gamma}^0  T^{\gamma 1}+\Gamma_{ 0 \gamma}^0  T^{0 \gamma }+\del_1 T^{1 1}+\Gamma_{1 \gamma}^1  T^{\gamma 1}+\Gamma_{1 \gamma}^1  T^{1 \gamma}+\del_2 T^{2 1}+\Gamma_{2 \gamma}^2 T^{ \gamma 1}\\
&+\Gamma_{2 \gamma}^1  T^{2 \gamma}+\del_3 T^{3 1}+\Gamma_{3 \gamma}^3  T^{ \gamma 1 }+\Gamma_{3 \gamma}^1  T^{3 \gamma}=0.
\endaligned
$$

Next, by substituting the expression of the Christoffel symbols in the  Euler system on a FLRW background, we obtain  the simplified system
\be
\aligned
\label{E.E.0}
&\del_0 T^{00} + \del_1 T^{10}+{3 \dot a \over ca}T^{00}+{ k r \over 1-kr^2}T^{10}+{ a \dot a \over c(1-kr^2)}T^{11}\\
&+{2\over r}T^{10}+{r^2a \dot a\over c}T^{22}+ {a \dot a r^2 \sin^2\theta\over c}T^{33}= 0,\\
&\del_0 T^{01} + \del_1 T^{11}+{4 \dot a \over ca}T^{01} +{ \dot a \over ca}T^{10}+{ 2kr \over (1-k r^2)}T^{11}\\
&+{ 1\over r}T^{11}-r(1-kr^2)T^{22}-r(1-kr^2) \sin^2\theta T^{33}= 0.
\endaligned
\ee
Finally, using the expressions for perfect fluids into  \eqref{E.E.0} and assuming that the pressure $p$ vanishes identically, we obtain the Euler system on a FLRW background:
\be
\aligned
\label{EE1}
&\,\,\, \del_0 \left({ \rho c^2\over c^2-v^2} \right)+\del_1 \left({ \rho cv(1-kr^2)^{1/2}\over a(c^2-v^2)} \right)+{3 \dot a \rho c\over a(c^2-v^2)}+{2 \rho cv(1-kr^2)^{1/2}\over ra(c^2-v^2)}\\
&\hskip5.cm  +{k r \rho cv\over a(c^2-v^2)(1-kr^2)^{1/2}}+ {\dot a v^2 \rho\over ca(c^2-v^2)} = 0,
\endaligned
\ee
\be
\aligned
\label{EE2}
&\del_0\left({c^2 \rho v (1-kr^2)^{1/2}\over a(c^2-v^2)} \right)
 +  \del_1 \left({cv^2 \rho (1-kr^2)\over a^2(c^2 -v^2)}\right) +{5 \dot a \rho v c (1-kr^2)^{1/2}\over a^2(c^2-v^2)}\\
&\hskip5.cm +{2krcv^2 \rho\over a^2(c^2-v^2)}+{2cv^2 \rho (1-kr^2)\over ra^2(c^2-v^2)}= 0.
\endaligned
\ee

\section{The relativistic Burgers equation on a FLRW background}

\subsection*{The derivation of the relativistic Burgers equation}

We now explain how to formal derive the relativistic Burgers equation from  the fluid equations \eqref{EE1}-\eqref{EE2}.  
First of all, from \eqref{EE1}, we express $\displaystyle \del_0
\left({ \rho c^2\over c^2-v^2} \right)$ as
$$
\aligned
\del_0 \left({ \rho c^2\over c^2-v^2} \right)=
& -\del_1 \left({ \rho cv(1-kr^2)^{1/2}\over a(c^2-v^2)} \right)-{3 \dot a \rho c\over a(c^2-v^2)}-{2 \rho cv(1-kr^2)^{1/2}\over ra(c^2-v^2)}\\
&  -{k r \rho cv\over a(c^2-v^2)(1-kr^2)^{1/2}}- {\dot a v^2 \rho\over ca(c^2-v^2)}.
\endaligned
$$
Next, we take partial derivatives  \eqref{EE2} and get 
$$
\aligned
&\del_0\left({c^2 \rho \over c^2-v^2} \right)\left({v (1-kr^2)^{1/2}\over a} \right)+\left({c^2 \rho \over c^2-v^2} \right)\del_0\left({v (1-kr^2)^{1/2}\over a} \right)\\
&\,+\del_1 \left({cv \rho (1-kr^2)^{1/2}\over a(c^2 -v^2)}\right) \left({v(1-kr^2)^{1/2}\over a}\right)+\left({cv \rho (1-kr^2)^{1/2}\over a(c^2 -v^2)}\right) \del_1 \left({v(1-kr^2)^{1/2}\over a}\right)\\
&\,+{5 \dot a \rho v c (1-kr^2)^{1/2}\over a^2(c^2-v^2)}+{2krcv^2
\rho\over a^2(c^2-v^2)}+{2cv^2 \rho (1-kr^2)\over ra^2(c^2-v^2)}= 0.
\endaligned
$$
We substitute the expression $\displaystyle\del_0 \left({
\rho c^2\over c^2-v^2} \right)$ and find 
\be
\aligned
\label{EE5}
& \del_1 \left({ \rho cv(1-kr^2)^{1/2}\over a(c^2-v^2)} \right)+{3 \dot a \rho c\over a(c^2-v^2)}+{2 \rho cv(1-kr^2)^{1/2}\over ra(c^2-v^2)}+{k r \rho cv\over a(c^2-v^2)(1-kr^2)^{1/2}}\\
&\qquad +{\dot a v^2 \rho\over ca(c^2-v^2)}\Bigg\}\left({v (1-kr^2)^{1/2}\over a} \right)+\left({c^2 \rho \over c^2-v^2} \right)\del_0\left({v (1-kr^2)^{1/2}\over a} \right)\\
&\qquad +\del_1 \left({cv \rho (1-kr^2)^{1/2}\over a(c^2 -v^2)}\right) \left({v(1-kr^2)^{1/2}\over a}\right)+\left({cv \rho (1-kr^2)^{1/2}\over a(c^2 -v^2)}\right) \del_1 \left({v(1-kr^2)^{1/2}\over a}\right)\\
&\qquad +{5 \dot a \rho v c (1-kr^2)^{1/2}\over
a^2(c^2-v^2)}+{2krcv^2 \rho\over a^2(c^2-v^2)}+{2cv^2 \rho
(1-kr^2)\over ra^2(c^2-v^2)}= 0.
\endaligned
\ee
After further straighforward calculations and replacing $(\del_t,\del_r)$ by
$(\del_0,\del_1)$, we reach 
\be 
\label{maineq} 
a^2\del_t ({v \over
a}(1-kr^2)^{1/2})+\del_r( ({v^2\over 2})(1-kr^2))+v(1-kr^2)^{1/2}a_t
(2-{v^2\over c^2})+rkv^2=0. 
\ee 
It follows that
$$
(av_t-va_t)(1-kr^2)^{1/2}+(1-kr^2)\del_r({v^2\over 2})-rkv^2+v(1-kr^2)^{1/2}a_t(2-{v^2\over c^2})+rkv^2=0,
$$
and thus,  after simplification, 
$$
av_t(1-kr^2)^{1/2} +(1-kr^2)\del_r({v^2\over 2})+v \Big( 1-kr^2 \Big)^{1/2}a_t(1-{v^2\over c^2})=0.
$$
Finally, we arrive at the following definition.

\begin{definition}
The {\bf relativistic Burgers equation on a FLRW background} is
\be
\label{lasteq}
a \, v_t + \big(1-kr^2 \big)^{1/2} \del_r \Big( {v^2 \over 2}\Big) + v \Big( 1-{v^2\over c^2} \Big) \, a_t = 0,
\ee
in which $a=a(t)>0$ is a given function,
$k \in \big\{-1, 0, 1\big\}$ is a discrete parameter,
and the light speed $c$ is a positive parameter.
\end{definition}

In the limiting case  $c \to  +\infty$, the equation \eqref{lasteq} can be rewritten as
\be
\label{good1}
 \del_t  \Big( {a(t)v \over (1-kr^2)^{1/2}}\Big)+ \del_r \Big( {v^2 \over 2}\Big) = 0,
\ee
which is a {\sl conservation law}.

In order to obtain an analogous equation from \eqref{lasteq} for finite $c$ values, we propose to rewrite \eqref{lasteq} as
\be
\label{good2}
 \del_t  \Big( {a(t)v \over (1-kr^2)^{1/2}}\Big)-{v^3 \over c^2} \del_t  \Big( {a(t) \over (1-kr^2)^{1/2}}\Big)+ \del_r \Big( {v^2 \over 2}\Big) = 0.
\ee
Furthermore, in the special case $a(t) \equiv 1$ for the equation \eqref{lasteq}, this latter equation is also a {\sl conservation law}
\be
\label{good3}
 \del_t  \Big( {v \over (1-kr^2)^{1/2}}\Big)+ \del_r \Big( {v^2 \over 2}\Big) = 0.
\ee


\subsection*{The initial value problem}

The equation \eqref{lasteq} is a nonlinear hyperbolic equation with time- and space-dependent coefficients.
The solutions admit jump discontinuities which propagate in time.
This equation fits in the general theory of entropy weak solutions to such equations by Kruzkov \cite{SNK}.
The notion of entropy solutions relies on the use of the so-called convex entropy pairs, defined as follows.

\begin{definition} A pair of Lipschitz continuous functions
$V, F$ is a convex entropy-entropy flux pair if  $V=V(v)$ is strictly convex and
$F':=vV'$ hold almost everywhere. A function  $v \in L^\infty(\mathbb{R}^+ \times \mathbb{R}^+ )$    is called an entropy solution of \eqref{lasteq},  if
for every convex entropy-entropy flux pair $(V,F)$
\be
\label{weak}
\aligned
&av_t+(1-kr^2)^{1/2}\del_r\big({v^2\over 2}\big)+v\big(1-{v^2\over c^2}\big)a_t=0,\\
&a V(v)_t+(1-kr^2)^{1/2}\del_r F(v)+v V'(v)\big(1-{v^2\over c^2}\big)a_t \leq 0,
\endaligned
\ee
hold in the sense of distributions.
\end{definition}

In view of the general theory in \cite{SNK}, we obtain the following.

\begin{theorem}
The equation \eqref{lasteq} admits an entropy weak solution  $v \in L^\infty(\mathbb{R}^+ \times \mathbb{R}^+ )$ satisfying the conditions \eqref{weak}  in the sense of Kruzkov's theory.
\end{theorem}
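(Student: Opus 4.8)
The plan is to recast \eqref{lasteq} as a scalar hyperbolic balance law with smooth, $(t,r)$-dependent coefficients and then to invoke Kruzkov's existence theory \cite{SNK}. Since $a=a(t)>0$, I would divide \eqref{lasteq} by $a$ and record it both in quasilinear form
\be
\del_t v + b(t,r)\, \del_r\Big({v^2 \over 2}\Big) + {a_t \over a}\, v\Big(1 - {v^2 \over c^2}\Big) = 0,
\qquad b(t,r) := {(1-kr^2)^{1/2} \over a(t)},
\ee
and, after moving the differentiation of $b$ to the right-hand side, in the conservative-with-source form
\be
\del_t v + \del_r\Big( b(t,r)\,{v^2 \over 2}\Big) = S(t,r,v),
\qquad
S(t,r,v) := {v^2 \over 2}\,\del_r b - {a_t \over a}\, v\Big(1 - {v^2 \over c^2}\Big).
\ee
For $k \le 0$ the coefficient $b$ and all its derivatives are smooth and bounded on $r \in (0,\infty)$, so the flux $f(t,r,v) = b\, v^2/2$ is $C^1$ and $S$ is smooth in $(t,r)$ and polynomial in $v$; when $k=1$ the factor $\del_r b$ becomes singular at the coordinate boundary $r=1$ of the spherical FLRW model, and I would localize to $r \in [0,R]$ with $R<1$, where the same regularity holds.

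The step I expect to be the main obstacle is that the zero-order term is \emph{cubic} in $v$ and therefore not globally Lipschitz, so Kruzkov's theorem does not apply verbatim and an a priori $L^\infty$ bound must be produced first. Here I would exploit the algebraic structure of the factor $v(1-v^2/c^2)$, whose only zeros are $v=0,\pm c$. Reading the quasilinear form along characteristics $\dot r = b v$, the value of $v$ obeys the pointwise ODE $\dot v = -{a_t \over a}\,v(1-v^2/c^2)$; since $a_t/a>0$ and the two endpoints $v=\pm c$ are equilibria, the interval $[-c,c]$ is invariant under this flow. Consequently, for physically admissible data $|v_0| \le c$ (speed bounded by the light speed), classical solutions satisfy $|v|\le c$ for all later times. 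I would make this rigorous at the level of weak solutions through the invariant-region principle for the approximations introduced below, thereby obtaining the uniform bound $\|v(t,\cdot)\|_{L^\infty} \le c$. On the confining interval $[-c,c]$ the cubic source coincides with a globally Lipschitz truncation, which places the balance law squarely within the hypotheses of \cite{SNK}.

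With a uniform $L^\infty$ bound and a (truncated) Lipschitz source in hand, I would construct approximate solutions either by the vanishing-viscosity method, adding $\mu\,\del_{rr}v$ with $\mu \downarrow 0$, or directly by the well-balanced finite volume scheme developed in the remainder of the paper. For these approximations the standard Kruzkov computation with the entropies $V(v)=|v-\kappa|$ yields the approximate form of the entropy inequality in the second line of \eqref{weak}, together with the compactness needed to extract a convergent subsequence. Passing to the limit then produces a function $v\in L^\infty(\RR^+\times\RR^+)$ that satisfies \eqref{lasteq} and the entropy inequalities \eqref{weak} in the sense of distributions for every convex entropy--entropy flux pair $(V,F)$, which is precisely the assertion. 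Should one also wish to record uniqueness and $L^1$-stability, these follow within the same framework by Kruzkov's doubling-of-variables argument, the only additional ingredient being the Lipschitz dependence of $S$ on $v$ over the invariant interval $[-c,c]$.
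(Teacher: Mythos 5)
Your proposal is correct in substance, but it supplies considerably more than the paper itself does: the paper offers no proof at all, only the sentence ``In view of the general theory in \cite{SNK}, we obtain the following,'' preceded by the assertion that \eqref{lasteq} ``fits'' Kruzkov's framework. Your route makes explicit exactly the points this bare appeal glosses over. First, you correctly observe that Kruzkov's theorem does not apply verbatim because the zero-order term $v\big(1-{v^2/c^2}\big)\,a_t/a$ is cubic, hence not globally Lipschitz in $v$; your repair --- the invariant interval $[-c,c]$ read off from the characteristic ODE $\dot v = -(a_t/a)\,v(1-v^2/c^2)$, whose equilibria are $0,\pm c$, followed by truncation of the source --- is the right one, and it is consistent with the paper's Proposition 5.1, whose spatially homogeneous solutions all take values in $(-c,c)$. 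Second, you flag the coordinate degeneracy at $r=1$ when $k=1$, which the paper silently ignores even though the theorem is stated on all of $\mathbb{R}^+\times\mathbb{R}^+$, where $(1-kr^2)^{1/2}$ is not even real for $r>1$; localizing to $[0,R]$ with $R<1$ is a sensible correction. Two minor inaccuracies in your write-up deserve mention, though neither undermines the argument: for $k=-1$ the coefficient $b=(1+r^2)^{1/2}/a(t)$ is smooth but \emph{not} bounded on $r\in(0,\infty)$ (it grows linearly; only its $r$-derivatives are bounded), so verifying Kruzkov's hypotheses on the unbounded domain must rest on bounds for $\partial_r b$ and the source rather than for $b$ itself; and the invariance of $[-c,c]$ does not actually require $a_t/a>0$ --- it holds for either sign of $a_t$ because $0$ and $\pm c$ are zeros of the source, which matters since contracting backgrounds ($a_t<0$) are also admitted by the paper's setup. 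In short, what your approach buys is an actual existence argument (uniform $L^\infty$ bound via invariant regions, entropy inequalities for viscous or finite volume approximations, compactness, and limit passage), whereas the paper's one-line citation would fail scrutiny at precisely the obstacle you identified.
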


Note in passing that, in the particular case $a(t)\equiv 1$ and $k \equiv 0$, we obtain the classical Burgers equation and the approximate solution of this equation satisfies the additional estimate
$$
\inf_xv(0,x)\leq\inf_xv(t,x)\leq\sup_xv(t,x)\leq\sup_xv(0,x).
$$
This is of course not true in general, and the lack of such properties in one of the challenges in order to numerically cope with discontinuous solutions to \eqref{weak}.


\section{Special solutions and non-relativistic limit}

\subsection*{Spatially homogeneous solutions}

We look for special classes of explicit solutions to Burgers equation on a FLRW  background \eqref{lasteq}, which involves the variable coefficients $a(t)$ and $a_t(t)$. Due to this $t$-dependency, it is easily checked that for all three values of $k$, there does not exist any static solution (except $v \equiv 0$).

On the other hand, in order to  find spatially homogeneous solutions of \eqref{lasteq}, we assume that $v$ depends only on $t$ so that the term $\del_r({v^2\over 2})$ vanishes identically, which means
\be
\label{space}
av_t+v(1-{v^2\over c^2})a_t=0.
\ee
By changing the notation $v_t$ to $v'$, and $a_t$ to $a'$, we write
$
{v'\over v(1-{v^2\over c^2})}=-{a'\over a},
$
which is equivalent to
$$
\Big( {1\over v}+{{v\over c^2}\over 1-{v^2\over c^2}} \Big) \, v'=-(\log a)'.
$$
It follows that
$
{\pm v\over \sqrt{1-v^2/c^2}}={w\over a}$, 
where $w$ is a constant.
Equivalently, we have
$
{a^2\over w^2}v^2=1-{v^2\over c^2}.
$
Thus the spatially homogeneous homogeneous solutions can be described by the explicit formula
\be
\label{E.E.2}
v(t)={\pm c\over \sqrt{1+{a^2(t)c^2\over w^2}}},
\ee
 where $w$ is a constant  parameter.
This is obviously true for all $k \in\{-1,0,1 \}$.


\begin{proposition}

The spatially homogeneous solutions to the relativistic Burgers equation on a FLRW background
\be
\label{sps}
v(t) = {w\over \sqrt{a(t)^2+{w^2\over c^2}}} \in (-c,c)
\ee
 are parametrized by a real parameter $w$ (where $c$ is the light speed).
\end{proposition}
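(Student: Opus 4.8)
The plan is to reduce the governing PDE to a first-order ODE in $t$, integrate it explicitly by separation of variables, and then verify the asserted range and that the family is exhaustive. Since we seek spatially homogeneous solutions, I would set $v = v(t)$, so that $\del_r(v^2/2) \equiv 0$ and the relativistic Burgers equation \eqref{lasteq} collapses to the separable ODE \eqref{space}, namely $a\,v' + v(1 - v^2/c^2)\,a' = 0$. Because $a(t) > 0$, on any interval where $0 < |v| < c$ this is equivalent to $\frac{v'}{v(1-v^2/c^2)} = -\frac{a'}{a}$, and this is the form I would integrate.

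The first key step is the partial-fraction decomposition $\frac{1}{v(1-v^2/c^2)} = \frac{1}{v} + \frac{v/c^2}{1-v^2/c^2}$, valid for $|v|<c$; integrating term by term gives $\log|v| - \frac12\log(1-v^2/c^2) = -\log a + \mathrm{const}$, hence $\frac{|v|}{\sqrt{1-v^2/c^2}} = \frac{\mathrm{const}}{a}$. Squaring and solving the resulting linear equation in $v^2$ yields $v^2\big(\frac{a^2}{w^2} + \frac{1}{c^2}\big) = 1$ for a constant $w$, i.e. $v^2 = \frac{w^2 c^2}{a^2 c^2 + w^2}$, which is precisely \eqref{E.E.2}; absorbing the integration constant and the overall sign into the single real parameter $w$ produces the closed form $v(t) = w/\sqrt{a(t)^2 + w^2/c^2}$ of \eqref{sps}.

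It remains to verify the range $v \in (-c,c)$ and to confirm that this family exhausts all homogeneous solutions. For the range, since $a(t)^2 > 0$ I would estimate $v^2 = \frac{w^2}{a^2 + w^2/c^2} < \frac{w^2}{w^2/c^2} = c^2$, so $|v| < c$ strictly for every $w \in \RR$ (with $w=0$ giving the trivial solution $v \equiv 0$). For exhaustiveness, I would invoke the standard existence–uniqueness theorem applied to \eqref{space}: the right-hand side $-v(1-v^2/c^2)a'/a$ is locally Lipschitz in $v$ on the open strip $|v|<c$, so through each admissible datum passes a unique solution; since the explicit family covers all initial values in $(-c,c)$, no further solutions arise.

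The computation is elementary throughout; the only genuinely delicate point is the sign bookkeeping during integration, since one must maintain $1-v^2/c^2 > 0$ both to drop the absolute value under the square root and to guarantee that a solution starting in the strip cannot reach the thresholds $v = \pm c$. As an independent and cleaner check that sidesteps these subtleties, I would substitute \eqref{sps} directly back into \eqref{space}, confirming the claimed formula and the parametrization by $w$.
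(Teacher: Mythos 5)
Your proposal is correct and follows essentially the same route as the paper: reduce to the separable ODE \eqref{space}, apply the partial-fraction decomposition $\frac{1}{v(1-v^2/c^2)} = \frac{1}{v} + \frac{v/c^2}{1-v^2/c^2}$, integrate to get $\frac{\pm v}{\sqrt{1-v^2/c^2}} = \frac{w}{a}$, and solve for $v$. Your additions beyond the paper --- the explicit range check $|v|<c$, the exhaustiveness argument via Picard--Lindel\"of uniqueness on the strip, and the back-substitution check --- are sound refinements of the same argument rather than a different approach.
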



\subsection*{Some limit properties of the relativistic Burgers equation}

Next, let us consider some limit properties of the equation \eqref{lasteq} when, for definiteness,  $a(t)=a_0\big({t\over t_0}\big)^\alpha$.
Observe in passing that \eqref{lasteq} is {\sl not linear in terms of the coefficient $a(t)$}
(since the second term in the equation does not include $a(t)$ or $a'(t)$).
Recall that the following parameters are relevant:
$$
\begin{cases}
\mbox{$k$: curvature constant }, \quad k \in [-1,1]\\
\mbox{$c$:  light speed }, \quad c \in (0,\infty)\\
\mbox{$a_0$: constant in the scale factor $a(t)$}, \quad a_0 \in (0,\infty)\\
\mbox{$\alpha$: exponent in the scale factor $a(t)$}, \quad  \alpha \in (0,\infty). \\
\end{cases}
$$
Two typical ranges of the time variables are relevant here, since shock wave solutions to
 nonlinear hyperbolic equations are {\sl only defined in a forward time directions:}
since at $t=0$ the equation is singular, we can treat the range $t \in [1,\infty)$ or the range
 $t \in [-1,0)$.
 For $t \in [1,\infty)$ we normalize $a_0=1$ and for $t \in [-1,0)$ we set $a_0=-1$.

In the case $t>1$, if we consider the limit $t\to +\infty,$ the equation is {\sl expanding toward the future time directions}, while
in the case $t<0$ when $t\to 0$, the equation is {\sl contracting in the future time directions}.


\subsubsection*{Recovering the standard Burgers equation}

The special  case $a_0=1$,  $t_0=1$, $\alpha=0$ (which means $a(t)=1$), with the particular case $k=0$ for the equation \eqref{lasteq} leads us to
\be
\del_tv + \del_r({v^2\over 2}) = 0,
\ee
which is  the classical Burgers equation.


\subsubsection*{The non-relativistic limit}

As mentioned earlier, by taking the limit  $c\to  +\infty$ in the equation \eqref{lasteq}, we obtain
\be
\del_t(av) +(1-kr^2)^{1/2} \del_r({v^2\over 2}) = 0.
\ee
We can also determine directly the limiting behaviour of the spatially homogeneous solutions to \eqref{lasteq}:  in view of \eqref{E.E.2}, we obtain
\be
\label{SI}
v(t)={1\over \sqrt{{1\over c^2}+{a^2(t)\over w^2}}},
\ee
where $w$ is a constant parameter. Here we have made the following observations:

\begin{itemize}

\item For spatially homogeneous solutions, we have $|v|<c$.

\item  In the expanding direction $t\to  +\infty$, we have  $v \to  0$.

\item  In the contraction direction $t\to  0$,   we have $v \to  c$ since   $a(t) \to  0$.

\item  We have $v\to  {w\over a(t)}$  as  $c\to  +\infty$.
\end{itemize}


\section{The finite volume method} 

\subsection*{Finite volume methodology for geometric balance laws}

In this section, we are motivated by the earlier works \cite{Russo1,Russo2} for nonlinear hyperbolic problems without relativistic features and \cite{LMO} concerning relativistic Burgers equations.
In Burgers equation on a FLRW background, the variable coefficients {\sl depend upon the time variable}
$t$, due to  the terms $a(t)$, $a'(t)$ and $k\in\{ -1,0,1\}$. Hence, the numerical approximation of solutions to Burgers equation on a FLRW background leads to a new challenge, in comparison with
flat or Schwarzschild backgrounds.

As explained earlier, the spacetime of interest is described by a single chart and some coordinates denoted by $(t,r)$. For the discretization, we denote the (constant) time length by $\Delta t$ and we set $t_n=n \Delta t$,
 and we introduce equally
spaced cells $I_j=[r_{j-1/2},r_{j+1/2}]$ with (constant) spatial length denoted by $\Delta r=r_{j+1/2}-r_{j-1/2}$. The finite volume method is based on an averaging of the balance law
\be
\label{finite1}
\del_t(T^0(t, r))+\del_r(T^1(t, r))=S(t,r),
\ee
over each grid cell $[t_n,t_{n+1}] \times I_j$, where   $ T^{\alpha}(v)=T^{\alpha}(t,r)$ and $S(t,r)$
are the flux and source terms, respectively. We thus have the identity
$$
\aligned
&\int_{r_{j-1/2}}^{r_{j+1/2}} (T^0(t_{n+1},r)-T^0(t_n,r))\,dr \\
&+\int_{t_n}^{t_{n+1}}(T^1(t,r_{j+1/2})-T^1(t,r_{j-1/2}))\,dt
= \int_{[t_n,t_{n+1}] \times I_j}S(t,r)\,dt\,dr
\endaligned
$$
or, by re-arranging the terms,
\be
\aligned
\label{finite3}
\int_{r_{j-1/2}}^{r_{j+1/2}} T^0(t_{n+1},r)\,dr=\int_{r_{j-1/2}}^{r_{j+1/2}} T^0(t_n,r)dr+\int_{[t_n,t_{n+1}] \times I_j}S(t,r)\,dt\,dr \\
-\int_{t_n}^{t_{n+1}}(T^1(t,r_{j+1/2})-T^1(t,r_{j-1/2}))\,dt.
\endaligned
\ee
Then, we introduce the following approximations
$$
\aligned
&  {1\over \Delta r}\int_{r_{j-1/2}}^{r_{j+1/2}} T^0(t_n,r)\,dr \simeq \overline{T}_j^n,
\qquad 
{1\over \Delta t}\int_{t_n}^{t_{n+1}}T^1(t,r_{j \pm 1/2}) \,dt \simeq \overline Q_{j \pm 1/2}^n,
\\
& {1 \over \Delta t \Delta r}\int_{[t_n,t_{n+1}] \times I_j}S(t,r)\,dt\,dr \simeq \overline{S}_j^n.
\endaligned
$$
so that our scheme take the following finite volume form
\be
\label{well1}
\overline{T}_j^{n+1}=\overline{T}_j^n-{\Delta t\over \Delta r}(\overline Q_{j +1/2}^n- \overline Q_{j -1/2}^n)+\Delta t  \overline{S}_j^n.
\ee
Keeping in mind the practical implementation of the scheme, we write also $\overline{T}_j^n=\overline{T}(v_j^n)$, where $\overline{T}$ is the (invertible) map determined by the equation. The piecewise constant approximations
$(v_j^n)$ at the ``next'' time level are thus given by the formula
\be
v_j^{n+1}=\overline{T}^{-1}
\Big( \overline{T}(v_j^n)-{\Delta t\over \Delta r}( \overline Q_{j +1/2}^n- \overline Q_{j -1/2}^n)+\Delta t  \overline{S}^{n}_j \Big).
\ee
For the scheme to be fully specified, we still need to select a numerical flux and an approximation of the source term.


\subsection*{Approximating Burgers equation on a FLRW background}

Consider the partial differential equation 
\be
\label{ww1}
 \del_t v + \del_r f(v,r) = 0,
\ee
for which time-dependent solutions have the property that $r \mapsto f(v(r),r)$ is constant in $r$. A general finite volume approximation for this equation \eqref{ww1} can be written as
\be
\label{sch}
v_j^{n+1}=v_j^n-{\Delta t \over \Delta r}(f _{j+1/2}^n  -f _{j-1/2}^n ).
\ee
Considering \eqref{ww1} together with \eqref{sch}, we have the following observing concerning the family of time-independent solutions.

\begin{claim}
If the initial flux-terms $f(v_j^0, r_j)$ are equal (that is, independent pf the spatial index $j$), then the scheme \eqref{sch} 
yields $ v_j^n = v_j^0$ which is thus independent of $n$.
\end{claim}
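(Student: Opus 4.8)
The plan is to prove the statement by induction on the time level $n$, reducing everything to showing that the spatial flux difference in \eqref{sch} vanishes at each step. First I would fix the common value of the initial flux terms, writing $f(v_j^0,r_j)=C$ for all $j$, and take as induction hypothesis that $v_j^n=v_j^0$ for every $j$ at a given level $n$. The base case $n=0$ is trivially true. The target of the induction step is to show that $f_{j+1/2}^n-f_{j-1/2}^n=0$ for all $j$, since \eqref{sch} then yields $v_j^{n+1}=v_j^n=v_j^0$ and closes the argument; the asserted $n$-independence of $v_j^n$ follows immediately.

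The key step is the passage from equality of the \emph{cell} flux values to equality of the \emph{interface} numerical fluxes. Under the induction hypothesis we have $f(v_j^n,r_j)=f(v_j^0,r_j)=C$ for every $j$, so all the one-sided physical flux evaluations coincide. I would then invoke the consistency of the numerical flux: since $f_{j+1/2}^n$ is assembled from the neighbouring states $v_j^n$ and $v_{j+1}^n$ (and the interface abscissa $r_{j+1/2}$) in such a way that it returns the common value whenever the two adjacent physical fluxes $f(v_j^n,r_j)$ and $f(v_{j+1}^n,r_{j+1})$ agree, one concludes $f_{j+1/2}^n=C$ for every $j$. All interface fluxes being equal to the single constant $C$, the telescoping difference $f_{j+1/2}^n-f_{j-1/2}^n=C-C$ vanishes, which is exactly what the induction step requires.

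The hard part is not the induction, which is routine, but isolating and justifying the well-balanced property of the numerical flux used in the middle paragraph. For a generic consistent flux this is not automatic: ordinary consistency only forces $f_{j+1/2}^n$ to equal the physical flux when the two neighbouring \emph{states} $v_j^n,v_{j+1}^n$ coincide, whereas here these states may differ while their fluxes agree (the equation \eqref{ww1} is nonlinear, so $f(\cdot,r)$ need not be injective in $v$). The genuine content of the claim is therefore a design constraint on $f_{j+1/2}^n$, and I expect the proof to hinge on verifying that the flux actually adopted in the scheme satisfies $f_{j+1/2}^n=f(v_j^n,r_j)$ precisely when $f(v_j^n,r_j)=f(v_{j+1}^n,r_{j+1})$. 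Once that property is secured, the induction proceeds with no further work.
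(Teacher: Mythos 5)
Your induction is the right skeleton, and your diagnosis of where the real content lies is sharper than what the paper itself offers: the paper states this Claim as a bare observation and provides \emph{no proof at all}, the intended (implicit) reasoning being exactly your argument that equal flux-terms force the interface differences $f_{j+1/2}^n-f_{j-1/2}^n$ to vanish, so \eqref{sch} leaves every $v_j^n$ unchanged and one iterates in $n$. The step you isolate as the crux is genuinely the whole point: since stationary profiles of \eqref{ww1} are nonconstant in $r$, the two states adjacent to an interface differ while their physical fluxes agree, and ordinary consistency $f_{j+1/2}(v,v)=f(v,r_{j+1/2})$ says nothing in that situation; a generic monotone flux (Lax--Friedrichs, say) would add viscosity proportional to $v_{j+1}^n-v_j^n$ and destroy stationarity, so the Claim is simply false without the flux-level property you impose. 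In other words, the Claim is really a design requirement on $f_{j+1/2}^n$ --- the well-balanced property around which the whole section is organized --- and your conditional formulation (``once that property is secured, the induction proceeds'') is the honest statement of what it asserts; in that sense your write-up is not merely equivalent to the paper's argument but is its missing formalization. Note also that your caveat matters downstream: the concrete Godunov flux \eqref{flux} used later in the paper is assembled from the \emph{states} (the terms $g_{j\pm1/2}^n$ multiplied by the interface coefficients $b_{j\pm1/2}^n$), for which the flux-level property you require is not immediate, and accordingly the paper's later well-balanced statement (Claim \ref{kkk}) is argued by a different route, characterizing discrete time-independence as a cancellation between the flux difference and the discrete source term rather than by forcing all interface fluxes to coincide.
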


The relativistic Burgers equation  \eqref{lasteq} under consideration is more involved 
and can be put in two different forms. The non--conservative form reads 
\be
\label{form1}
v_t + \big(1-kr^2 \big)^{1/2}{1 \over a(t)} \del_r \Big( {v^2 \over 2}\Big) =- v ( 1-v^2 ) \, {a_t(t)\over a(t)},
\ee
where the source term is
$S_N=- v ( 1-v^2 ) \, {a_t(t)\over a(t)}$, while the conservative form reads 
\be
\label{form2}
\del_t v
+\del_r\Big((1-kr^2)^{1/2}{v^2\over 2a(t)}\Big)=-\Big({krv^2\over
2a(t)}(1-kr^2)^{-1/ 2}+v(1-v^2){a_t(t)\over a(t)}\Big),
\ee
with the source term
$S_C=-\Big({krv^2\over
2a(t)}(1-kr^2)^{-1/ 2}+v(1-v^2){a_t(t)\over a(t)}\Big)$. 
Note the obvious relation
$S_C= S_N + \widetilde S$
with 
$\widetilde S= - {krv^2\over 2a(t)}(1-kr^2)^{-1/ 2}$. 

The finite volume scheme in both cases has the general form 
\be
\label{form3}
v_j^{n+1}=v_j^n-{\Delta t \over \Delta r}(b_{j+1/2}^n g _{j+1/2}^n  - b_{j-1/2}^n g _{j-1/2}^n )+\Delta t S_j^n,
\ee
where $b_{j+1/2}^n g _{j+1/2}^n=f _{j+1/2}^n$ are the numerical flux functions. This is the form that our scheme will take.


\subsection*{Second-order Godunov--type scheme}

Any first order scheme for the equation
$
\del_t v + \del_r f(v,r) = 0
$
can be turned into a second-order method by advancing the
cell-boundary values which are used in the numerical flux functions
in order to determine the intermediate time level $t^{n + 1/2}=(t^n +t^{n+1})/2$.  More precisely, the second-order
Godunov scheme is obtained from the edge values of the reconstructed
profile advanced by half a time step. Following van Leer
\cite{L} and the textbook Guinot \cite{VG}, our
algorithm of the method is formulated as follows:
\begin{itemize}
\item We reconstruct the variable within the computational cells.
This couples of values $(v_{i,L}^n, v_{i,R}^n)$ in each
computational cell. We know that $v_{i,L}^n$ lies between
$v_{i-1}^n$ and $v_{i}^n,$ and $v_{i,R}^n$ lies between $v_{i}^n$
and $v_{i+1}^n$.
\item We proceed the solution by half a step in time. The intermediate values $v$ at the cell edges
at the time $t^{n + 1/2}=(t^n+t^{n+1})/2$ are denoted by
$(v_{i,L}^n,v_{i,R}^n)$. We calculate these values by
$$
v_{i,L}^{n+1/2}=v_{i,L}^n-{\Delta t \over 2\Delta
r}[f(v_{i,R}^n)-f(v_{i,L}^n)],
$$
$$
v_{i,R}^{n+1/2}=v_{i,R}^n-{\Delta t \over 2\Delta
r}[f(v_{i,R}^n)-f(v_{i,L}^n)].
$$
\item We next solve  the Riemann problem formed by the intermediate values $(v_{i,L}^n,v_{i,R}^n)$.
The solution $v_{i+1/2}^{n+1/2}$ is used to compute the flux
$f_{i+1/2}^{n+1/2}=f(v_{i+1/2}^{n+1/2})$.
\item Finally we proceed the solution by the time step $\Delta t$ from $t^n$ using the classical formula
$$
v_i^{n+1}=v_i^n-{\Delta t \over \Delta
r} \, \Big( f_{i+1/2}^{n+1/2}-f_{i-1/2}^{n+1/2} \Big).
$$
In the next section, according to this algorithm, we reconstruct our
numerical second-order scheme by considering the relativistic
Burgers equation including a source term.
\end{itemize}


\section{Numerical approximation of Burgers equation on a FLRW background}

\subsection*{Godunov scheme for Burgers equation}

In this part,  numerical experiments are illustrated  for the model derived on a FLRW spacetime based on a first order  Godunov scheme.  Mainly,  the behaviours of initial single shocks and rarefactions are examined in the numerical tests depending on three particular cases of constant $k$. 
Analogously, depending on the parameter $k$ in the main equation, we have several illustrations.

We analyze the given model  with a single shock and rarefaction for an  initial function considering the Godunov scheme with a local Riemann problem for each grid cell. In the experiments for test functions, we choose $a(t)= t^2$ and $r \in [-1,1]$.  Since our scheme has singularities at $t=0$ stemming from the function $a(t)$, we start by taking $t>1$ for all cases of $k= -1, 0, 1$. In Riemann problem both shocks and rarefaction waves are produced, thus we look for the fastest wave at each grid cell. We impose transmissive  boundary conditions on the scheme.
After normalization  (taking $c=1$) in the equation \eqref{lasteq}, we obtain the following model
\be
\label{godunov1}
\del_t v +(1-kr^2)^{1/2}{1\over a(t)}\del_r\Big({v^2\over 2}\Big)=-\Big(v(1-v^2){a_t(t)\over a(t)}\Big),
\ee
and the corresponding finite volume scheme is  written as
\be
\label{godunov3}
v_j^{n+1}=v_j^n-{\Delta t \over \Delta r}(b_{j+1/2}^n\  g _{j+1/2}^n  - b_{j-1/2}^n g _{j-1/2}^n )+\Delta t S_j^n,
\ee
where
$$
S_j^n=-\Big(v_j^n(1-(v_j^n)^2){a_t^n\over a^n}\Big),
$$
and
$$
\aligned
b_{j\pm1/2}^n&={ (1-k({r}_{j\pm1/2}^n)^2)^{1/2} \over a^n},\\
g _{j-1/2}^n&=f(v_{j-1}^n,v_{j}^n),\qquad
g _{j+1/2}^n&=f(v_j^n,v_{j+1}^n),
\endaligned
$$
with flux function $f(u,v)$  defined as follows
\be
\label{flux}
f(u,v)= \begin{cases}{u^2 \over 2}, \quad  \, \mbox {if} \qquad u>v \qquad   \mbox {and} \qquad u+v>0,\\
{v^2 \over 2}, \quad  \, \mbox {if} \qquad u>v \qquad   \mbox {and} \qquad u+v<0,\\
{u^2 \over 2}, \quad  \, \mbox {if} \qquad u\leq v \qquad   \mbox {and} \qquad u>0,\\
{v^2 \over 2}, \quad  \, \mbox {if} \qquad u\leq v \qquad   \mbox {and} \qquad v<0,\\
\,{0},\quad  \, \, \, \mbox{if}  \qquad u\leq v \qquad \mbox{and} \qquad  u \leq 0 \leq v.\\
              \end{cases}
\ee
For the sake of stability, we require that $\Delta t$ and $\Delta r$ satisfy 
$$
{\Delta t \over \Delta r} \underset{j}{\max}\Big|{(1-k(r_j^n)^2)^{1/2}v_j^n\over a^n}\Big| \leq 1. 
$$ 

We implemented the first-order Godunov scheme and studied the dynamics of 
shocks and rarefactions, by comparing between the 
cases $k=-1$, $k=0$ and $k=1$. These results are presented in  Figures $1$ and
$2$. From these graphs we observe that the numerical solution for
the particular case $k=-1$, which is represented by the red line,
moves faster than the particular case $k=0$ and  $k=1$, represented
by the green line and the blue line, respectively.  This  can
also be checked by plugging $k=-1,0,1$ into the speed term given by
\eqref{spd} which supports the theoretical background of the model
with the numerical results. We also observe that, for all particular
cases of $k=-1$, $k=0$  and $k=1$, the solution curves converge and
this yields the efficiency and robustness of the scheme.

Recall that   our model   admits  spatially homogenous solutions  described by equation  \eqref{sps} which yields
\be
\label{omega}
w = {v(t)a(t)\over \sqrt{1-v(t)^2}}.
\ee 

\begin{claim}
\label{kkk}
If $w$ given by \eqref{omega} remains constant, then the proposed scheme is well-balanced in the sense that that discrete forms of the spatially homogeneous solutions are computed exactly. 
\end{claim}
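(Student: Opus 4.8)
The plan is to substitute a spatially homogeneous state into the update \eqref{godunov3} and verify that it is mapped to another member of the homogeneous family \eqref{sps}. Fix the parameter $w$ and take as data at the level $t_n$ the constant field $v_j^n = v^n$ for every $j$, where $v^n = w/\sqrt{(a^n)^2 + w^2}$ is the value of \eqref{sps} at $t_n$. The goal is to show that \eqref{godunov3} returns a field $v_j^{n+1}$ that is again independent of $j$ and again lies on \eqref{sps} at $t_{n+1}$ with the same $w$; reading $w$ back through \eqref{omega}, this is exactly the assertion that the discrete $w$ remains constant in space.

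The first step is to evaluate the Godunov flux on constant data. Since $g_{j\pm 1/2}^n = f(v^n,v^n)$ and the equality $u=v$ selects the branches with $u\le v$ in \eqref{flux}, a one-line case distinction on the sign of $v^n$ (positive, negative, or zero) gives $f(v^n,v^n) = (v^n)^2/2$ in every case. Hence $g_{j-1/2}^n = g_{j+1/2}^n = (v^n)^2/2$, so no part of the flux difference originates from the $v$-dependence. This is the discrete shadow of the identity $\partial_r(v^2/2)=0$ that defines the homogeneous solutions in \eqref{space}, and it is precisely the mechanism isolated in the constant-flux claim stated just after \eqref{sch}.

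For $k=0$ the argument closes immediately: there $b_{j\pm1/2}^n = 1/a^n$, the flux difference vanishes outright, and \eqref{godunov3} collapses to $v_j^{n+1} = v^n + \Delta t\,S_j^n = v^n - \Delta t\,v^n\big(1-(v^n)^2\big)a_t^n/a^n$. This is exactly the explicit Euler increment for the homogeneous ordinary differential equation \eqref{space} (with $c=1$), whose solution is \eqref{sps}. The result carries no dependence on $j$, so spatial homogeneity is preserved and the updated state remains on the family \eqref{sps}; I would present this as the transparent core of the proof.

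The hard part is the curved cases $k=\pm1$, where $b_{j\pm1/2}^n = (1-k r_{j\pm1/2}^2)^{1/2}/a^n$ genuinely depends on $j$ and the flux difference leaves the nonzero geometric remainder $\frac{(v^n)^2}{2a^n}\big((1-k r_{j+1/2}^2)^{1/2}-(1-k r_{j-1/2}^2)^{1/2}\big)$. For homogeneity to survive, this remainder must be cancelled by the source contribution: it is the discrete image of the term $\widetilde S = -\frac{krv^2}{2a(1-kr^2)^{1/2}}$ that separates the conservative source $S_C$ from $S_N$ in \eqref{form2}. The crux is that the cancellation must be \emph{exact} rather than only $O(\Delta r^2)$: a centered difference of $(1-kr^2)^{1/2}$ reproduces its $r$-derivative only to leading order, so one cannot use the pointwise value of $\widetilde S$ at $r_j$. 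The well-balanced construction is instead to \emph{define} the discrete geometric source through the very same edge quantities $b_{j\pm1/2}^n$ that enter the flux, i.e. to set its value equal to $\frac{1}{\Delta r}\big(b_{j+1/2}^n - b_{j-1/2}^n\big)(v^n)^2/2$. With the source built in this way the geometric remainder and the geometric source cancel term by term, \eqref{godunov3} again reduces to the Euler step for \eqref{space}, and one concludes that homogeneous data are advanced to homogeneous data along \eqref{sps} with $w$ held fixed in space, which is the claimed well-balanced property. I expect this exact matching of the flux and source discretizations to be the only genuine difficulty; everything else is the bookkeeping above.
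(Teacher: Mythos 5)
Your proposal is correct in spirit but follows a genuinely different --- and considerably more careful --- route than the paper's own proof. The paper's proof is a two-line characterization: it states that discrete solutions with $v_j^{n+1}=v_j^n$ are exactly those for which the flux difference balances the source term, and then asserts that this balance condition ``by construction'' represents a discretization of the spatially homogeneous solutions; no case analysis on $k$, no evaluation of the Godunov flux on constant states, and no verification of any cancellation is carried out (the paper even identifies time-independence $v_j^{n+1}=v_j^n$ with spatial homogeneity, although the solutions \eqref{sps} depend on $t$). You instead verify the property directly: you compute $f(v,v)=v^2/2$ from \eqref{flux}, observe that for $k=0$ the flux difference in \eqref{godunov3} vanishes identically so the scheme collapses to the explicit Euler step for the ODE \eqref{space}, and then isolate the real difficulty --- for $k=\pm1$ the edge coefficients $b_{j\pm1/2}^n$ depend on $j$, so constant data produce a geometric remainder $\frac{(v^n)^2}{2a^n}\bigl((1-kr_{j+1/2}^2)^{1/2}-(1-kr_{j-1/2}^2)^{1/2}\bigr)$ which the pointwise source $S_j^n$ of \eqref{godunov3} (and even the pointwise value of $\widetilde S$ from \eqref{form2}) cancels only to $O(\Delta r^2)$, not exactly. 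Your remedy --- discretizing the geometric source with the same edge differences $\frac{1}{\Delta r}\bigl(b_{j+1/2}^n-b_{j-1/2}^n\bigr)\frac{(v_j^n)^2}{2}$ that enter the flux --- is the standard well-balanced construction and is exactly what is needed; but note that this means you are proving a corrected statement, since the scheme as literally written in \eqref{godunov3} (conservative flux difference combined with only the non-conservative source $S_N$) does not preserve spatially constant data when $k\neq 0$. The paper's vague ``by construction'' phrasing hides precisely this compatibility requirement, which your analysis exposes; that is the main value of your route over the paper's.

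One caveat on your wording: the assertion that the update ``remains on the family \eqref{sps}'' with $w$ from \eqref{omega} held exactly constant is an overstatement. The explicit Euler step for \eqref{space} does not follow the continuous curve \eqref{sps} exactly; it follows its \emph{discrete form}, namely the Euler iterates of \eqref{space}, and the value of $w$ recomputed from \eqref{omega} at the new time level drifts by $O(\Delta t^2)$ per step. This is in fact what the claim's phrase ``discrete forms of the spatially homogeneous solutions are computed exactly'' should be taken to mean, so the conclusion of your argument ought to be stated as: spatially homogeneous discrete data remain spatially homogeneous and evolve by the consistent one-dimensional discretization of \eqref{space}, rather than as exact constancy of $w$ along \eqref{sps}.
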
 

\begin{proof} Discrete solutions satisfying the time-independence property 
$$
v_j^{n+1}=v_j^n
$$
are charcaterized by the condition 
$$
{\Delta t \over \Delta r}(b_{j+1/2}^n\  g _{j+1/2}^n  + b_{j-1/2}^n g _{j-1/2}^n )+\Delta t S_j^n = 0, 
$$
or equivalently, 

$$
{1\over \Delta r}(b_{j+1/2}^n\  g _{j+1/2}^n  + b_{j-1/2}^n g _{j-1/2}^n )
= 
-  v_j^n(1-(v_j^n)^2){a_t^n\over a^n},
$$
which by construction represent a discretization of the spatially homogeneous  solutions of interest. 
\end{proof} 

\subsection*{Second-order well-balanced Godunov scheme}

In order to increase the accuracy in the numerical experiments, we
construct a second-order well--balanced scheme based on Godunov
method. According to  the construction detailed in the previous
section for the second-order schemes, we write  the second-order finite volume
approximation for our model. The schemes for the intermediate values
and the proceeding solutions are formulated as follows

\be
\label{w1}
v_{j\pm 1/2}^{n+1/2}=v_{j\pm 1/2}^n-{\Delta t \over 2
\Delta r}(b_{j+1/2}^{n} g _{j+1/2}^{n}  - b_{j-1/2}^{n} g
_{j-1/2}^{n} )+{\Delta t \over 2} S_{j\pm 1/2}^{n}, \ee \be
\label{w2} v_j^{n+1}=v_j^n-{\Delta t \over \Delta
r}(b_{j+1/2}^{n+1/2} g _{j+1/2}^{n+1/2}  - b_{j-1/2}^{n+1/2} g
_{j-1/2}^{n+1/2} )+\Delta t S_j^{n+1/2},
\ee
where
$
t^{n + 1/2}=(t^n +t^{n+1})/2
$
and
$$
\aligned
b_{j\pm1/2}^{n+1/2}&={ (1-k({r}_{j\pm1/2}^{n+1/2})^2)^{1/2} \over a^{n+1/2}},\\
g _{j-1/2}^{n+1/2}&=f(v_{j-1}^{n+1/2},v_{j}^{n+1/2}),\qquad
g _{j+1/2}^{n+1/2}&=f(v_j^{n+1/2},v_{j+1}^{n+1/2}),
\endaligned
$$
with

$$
S_j^{n+1/2}=-\Big(v_j^{n+1/2}(1-(v_j^{n+1/2})^2){a_t^{n+1/2}\over a^{n+1/2}}\Big).
$$

In addition, the flux function function $f$ is given analogously by the relation
\eqref{flux}.

The implementation of the second-order Godunov scheme is based on
the construction given above. In order to compare the efficiency of
the first-order and second-order schemes, we repeat the numerical
tests for the second-order case which are already implemented  for
the first order schemes. We consider shocks and rarefactions  in
Figures $3$ and $4$ for three cases $k=-1$, $k=0$ and $k=1$  of the
second-order well-balanced Godunov scheme. We deduce that the
numerical solution for the particular case $k=-1$, represented by
the red line,  again moves faster than the particular cases $k=0$
and $k=1$, represented by the green line and the blue line,
respectively.
\begin{figure}
\centering
\includegraphics[width=5.5cm]{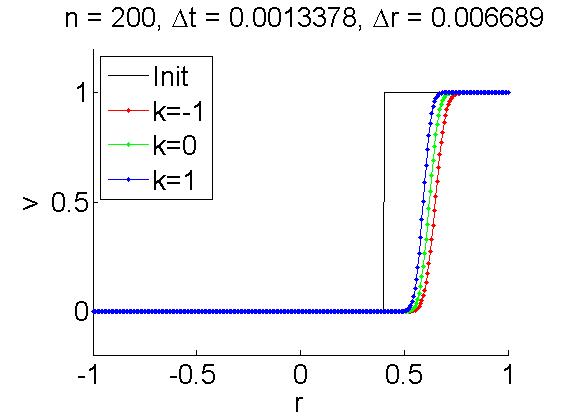} \includegraphics[width=5.5cm]{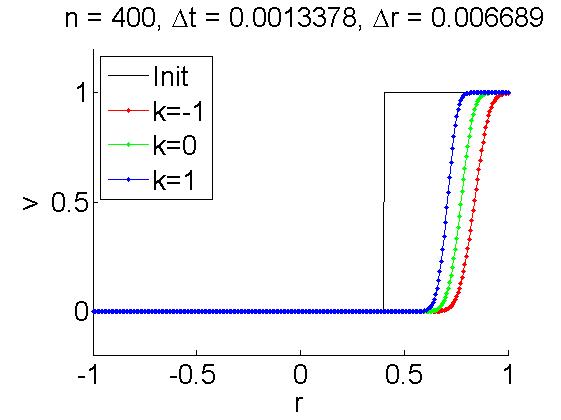} \\
\includegraphics[width=5.5cm]{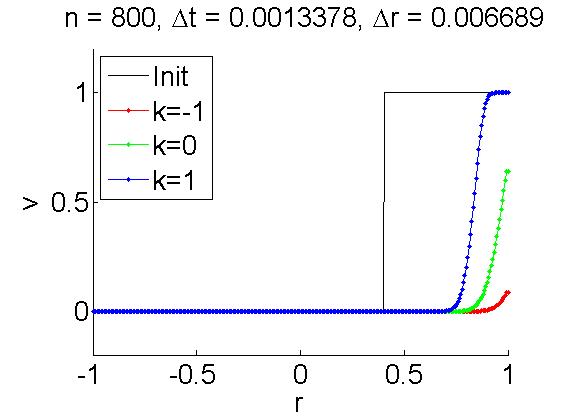} \includegraphics[width=5.5cm]{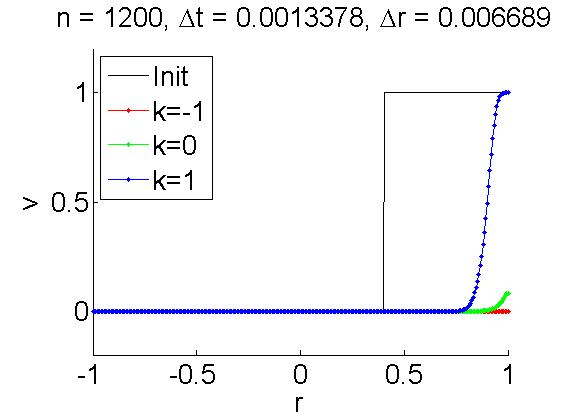}
\caption{The numerical solutions given by the Godunov scheme with a rarefaction for the particular  cases  $k=-1$, $k=0$ and $k=1$.}
\end{figure}

\begin{figure}
\centering
\includegraphics[width=5.5cm]{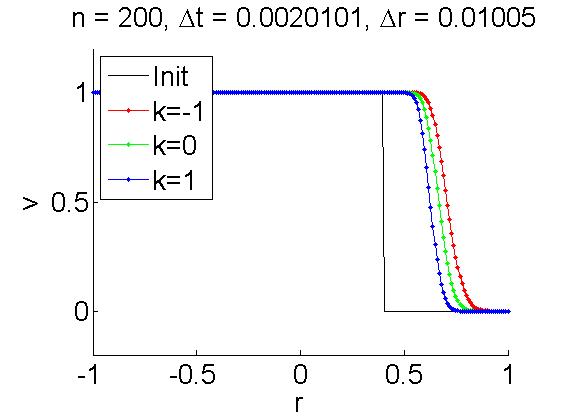} \includegraphics[width=5.5cm]{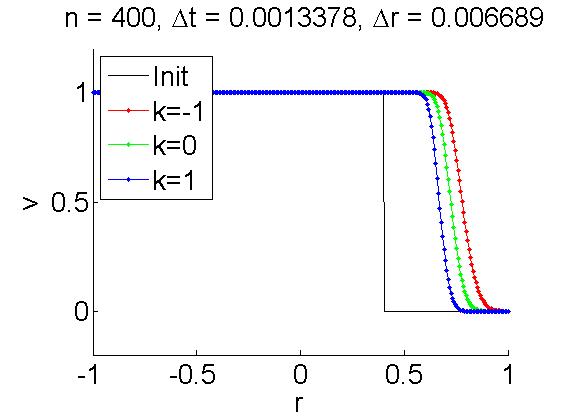} \\
\includegraphics[width=5.5cm]{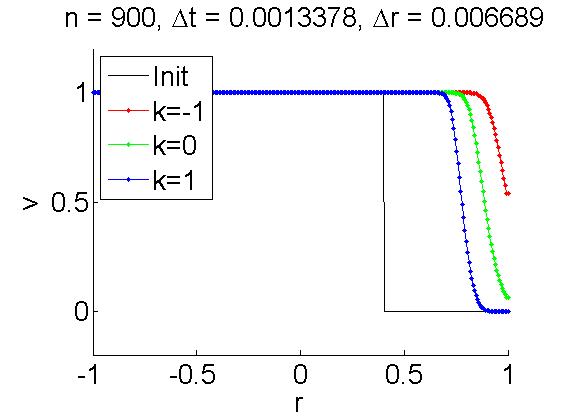} \includegraphics[width=5.5cm]{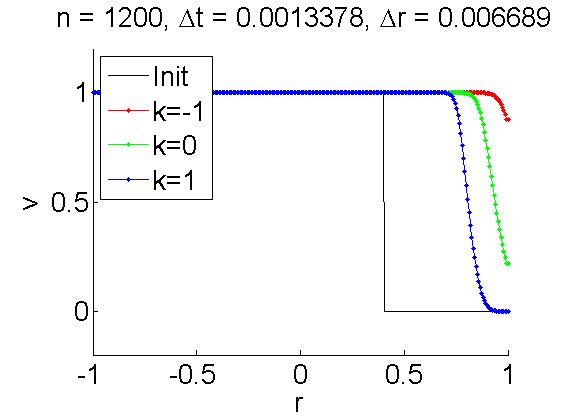}
\caption{The numerical solutions given by the Godunov scheme with a shock for the particular  cases  $k=-1$, $k=0$ and $k=1$.}
\end{figure}

\begin{figure}
\centering
\includegraphics[width=5.5cm]{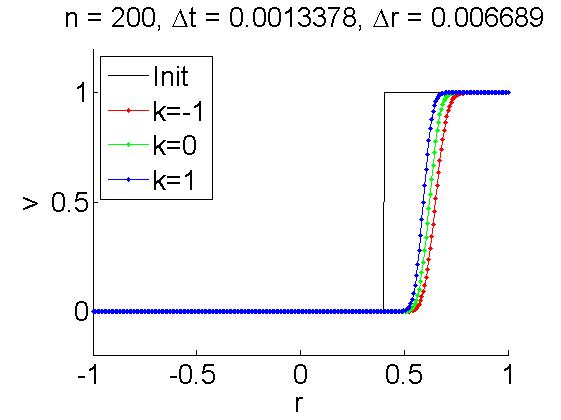} \includegraphics[width=5.5cm]{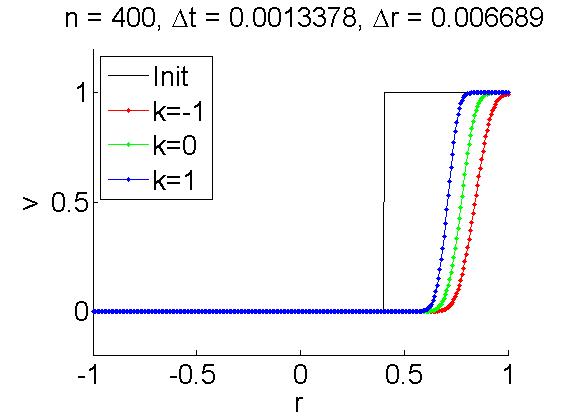} \\
\includegraphics[width=5.5cm]{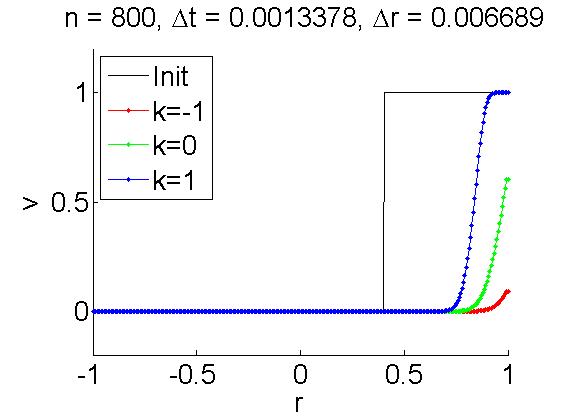} \includegraphics[width=5.5cm]{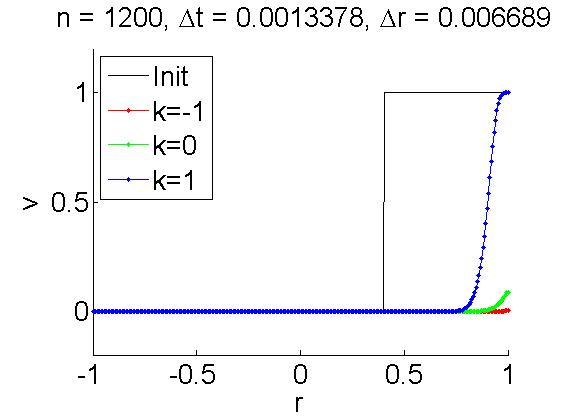}
\caption{The numerical solutions given by the second-order well-balanced Godunov scheme with a rarefaction for the particular  cases  $k=-1$, $k=0$ and $k=1$.}
\end{figure}

\begin{figure}
\centering
\includegraphics[width=5.5cm]{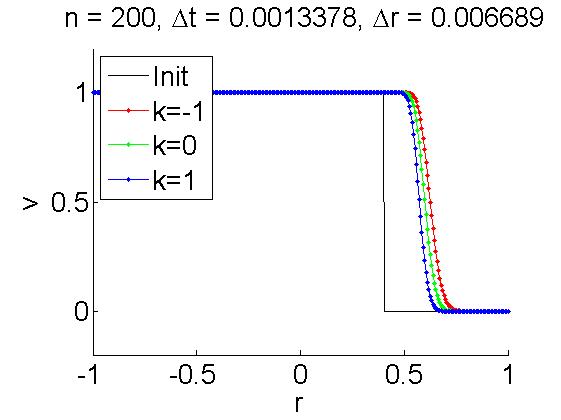} \includegraphics[width=5.5cm]{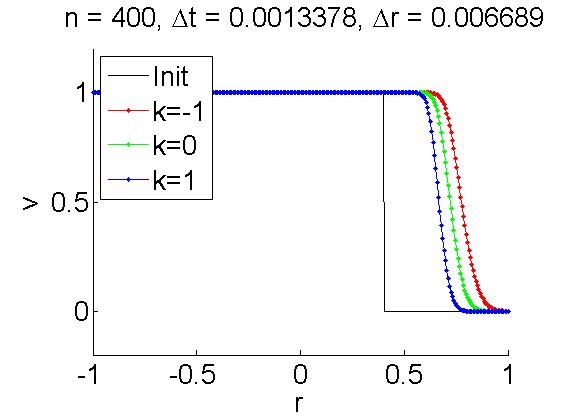} \\
\includegraphics[width=5.5cm]{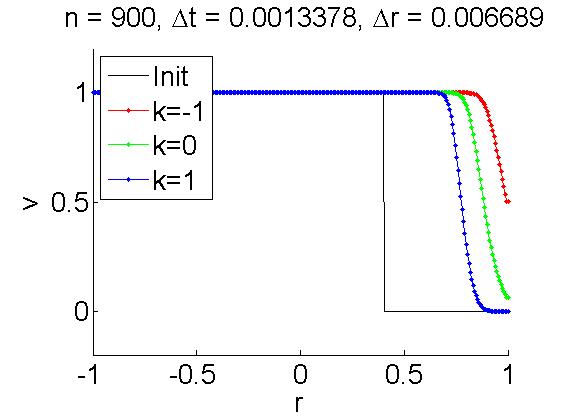} \includegraphics[width=5.5cm]{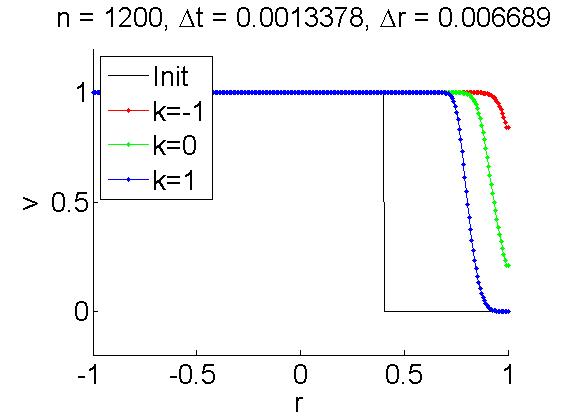}
\caption{The numerical solutions given by the  second-order well-balanced Godunov scheme with a shock for the particular  cases  $k=-1$, $k=0$ and $k=1$.}
\end{figure}

\section{Concluding remarks}

In this paper, we have studied a nonlinear hyperbolic model which describes the propagation and interactions of shock waves on a Friedmann--Lema\^{i}tre--Robertson--Walker background spacetime.
We started from the relativistic Euler equations on a curved background and imposed a vanishing pressure in the expression of the energy--momentum tensor for perfect fluids. This led us to a {\sl geometric relativistic Burgers equation} (see \eqref{Euler4}) on the background spacetime under consideration. On a FLRW spacetime, the equation \eqref{Euler4} yields the model \eqref{lasteq} of interest in the present work.
The model involves a scale factor $a=a(t)$ which depends on the so-called 'cosmic time' and a constant coefficient $k$, which can be normalized to take the values $\pm1$ or $0$. The standard Burgers equation is recovered when $a(t)$ and $k$ are chosen to vanish.
We have then established various mathematical properties concerning the
hyperbolicity, genuine nonlinearity, shock waves, and rarefaction waves,
and we studied the class of spatially homogeneous solutions.

We have investigated shock wave solutions to our model for the three possible values of the coefficient $k$.

\begin{itemize}

\item We compared numerical solutions for  the cases $k=-1$, $k=0$ and $k=1$, and we found that the solution curve corresponding to $k=-1$ converges faster than the solution curve corresponding to $k=0$ and $k=1$ (Figures $1,2,3$ and $4$). This can be explained from the equation \eqref{lasteq} by observing that the characteristic speed
$\big(1-kr^2 \big)^{1/2}$ is increased by decreasing $k$.

\end{itemize}

Our analysis relies on a proposed numerical discretization scheme which applies to
discontinuous solutions and is based on the finite volume technique.

\begin{itemize}

\item Our scheme is  consistent with the conservative form of (the principal part of) our model and therefore correctly compute weak solutions containing shock waves.

\item Importantly, the proposed scheme is well-balanced, in the sense that it preserves (at the discrete level of approximation) all spatially homogeneous solutions.

\item Our numerical experiments illustrate the convergence, efficiency and robustness of the proposed
scheme on a FLRW background.

\end {itemize}

To conclude, we emphasize that the proposed methodology leading to a geometric relativistic balance law
may be used to derive other relativistic versions of Burgers equations on various classes of spacetimes. The advantages of such simplified nonlinear hyperbolic models is that they allow one to develop and test numerical methods for shock capturing and to reach definite conclusions concerning their convergence, efficiency, etc. Future work may include more singular backgrounds. Depending upon the particular background geometry, different techniques may be required in order to guarantee that certain classes of solutions of particular interest be preserved by the scheme, as we achieved it for time-dependent solutions.


\medskip

\

{\bf Acknowledgement.} The first author (T.C.) and the third author
(B.O.) were supported by the { \it Rectorate of Middle East Technical
University} (METU) through the grant "Project
BAP--08--11--2013--041". They were  also partially supported by the  {\it  Scientific and Technical Research Council of Turkey} (T\"{U}B\.{I}TAK)  through the grant "Ph.d. BIDEB 2214-A Scholarship Program".


\end{document}